\newtheorem{theorem}{Theorem}
\newtheorem{lemma}[theorem]{Lemma}
\newtheorem{proposition}[theorem]{Proposition}
\theoremstyle{definition}
\newtheorem{definition}[theorem]{Definition}
\newtheorem{remark}[theorem]{Remark}
\newcommand{\sA}{{\mathcal A}}
\newcommand{\sB}{{\mathcal B}}
\newcommand{\sP}{{\mathcal P}}
\newcommand{\bN}{{\mathbb N}}
\newcommand{\bR}{{\mathbb R}}
\newcommand{\bC}{{\mathbb C}}
\newcommand{\bQ}{{\mathbb Q}}
\newcommand{\alphaCert}{{\bf alphaCertified}}
\newcommand{\alphaCertS}{{\bf alphaCertified~}}
\newcommand{\sK}{{\mathcal K}}
\newcommand{\sR}{{\mathcal R}}
\newcommand\Var{{\mathcal V}}
\newcommand\Null{{\rm null}}
\newcommand\Compute{{\bf ComputeConstants}}
\newcommand\AlgCertX{{\bf CertifySolns}}
\newcommand\AlgCertDist{{\bf CertifyDistinctSoln}}
\newcommand\AlgCertReal{{\bf CertifyRealSoln}}
\newcommand\AlgCertCount{{\bf CertifyCount}}
\title[alphaCertified: certifying solutions to polynomial systems]{alphaCertified: certifying solutions to polynomial systems}
\author{Jonathan D. Hauenstein}
\address{Jonathan D. Hauenstein\\
         Department of Mathematics\\
         Texas A\&M University\\
         College Station\\
         Texas \ 77843\\
         USA}
\email{jhauenst@math.tamu.edu}
\urladdr{\url{http://www.math.tamu.edu/~jhauenst}}
\thanks{Research of Hauenstein supported in part by the Fields Institute and NSF grant DMS-1114336.}
\author{Frank Sottile}
\address{Frank Sottile\\
         Department of Mathematics\\
         Texas A\&M University\\
         College Station\\
         Texas \ 77843\\
         USA}
\email{sottile@math.tamu.edu}
\urladdr{\url{http://www.math.tamu.edu/~sottile}}
\thanks{Research of both authors supported in part by NSF grants DMS-0915211 and
        DMS-0922866, and the Royal Swedish Academy of Sciences through the Institut
        Mittag-Leffler.}
\subjclass{65G20, 65H05}
\keywords{certified solutions, alpha theory, polynomial system, numerical
  algebraic geometry}
\begin{document}

\begin{abstract}
 Smale's $\alpha$-theory uses estimates related to the convergence of
 Newton's method to certify that Newton iterations
 will converge quadratically to solutions to a square polynomial system.
 The program \alphaCertS implements algorithms based on $\alpha$-theory to certify
 solutions of polynomial systems using both exact rational arithmetic and
 arbitrary precision floating point arithmetic.
 It also implements algorithms that certify whether a given point corresponds to a real
 solution, and algorithms to heuristically validate solutions to
 overdetermined systems.  Examples are presented to demonstrate the algorithms.
\end{abstract}

\maketitle
%
\section*{Introduction}\label{Sec:intro}

Current implementations of numerical homotopy algorithms~\cite{AG90,Morgan,SW05}
such as PHCpack~\cite{V99}, HOM4PS~\cite{HOM4PS}, Bertini~\cite{BHSW06}, and
NAG4M2~\cite{NAG4M2} routinely and reliably solve systems of polynomial equations with dozens
of variables having thousands of solutions.
Here, `solve' means `compute numerical approximations to solutions.'
In each of these software packages, the solutions are validated heuristically---often by
monitoring iterations of Newton's method.
This works well in practice, giving solutions that are acceptable in most applications.
However, a well-known shortcoming of numerical methods for computing approximate solutions
to systems of polynomials is that the output is not certified.
This restricts their use in some applications, including those in pure mathematics.
The program \alphaCertS is intended to remedy this shortcoming.

In the 1980's, Smale~\cite{S86} and others investigated the convergence of Newton's method,
developing 
$\alpha$-theory~\cite[Ch.~8]{BCSS}.
This refers to a computable positive constant $\alpha(f,x)$ depending upon a system
$f\colon\bC^n\to\bC^n$ of polynomials and a point $x\in\bC^n$ such that, if
\[
   \alpha(f,x)\ <\ \frac{13-3\sqrt{17}}{4}\  \approx\  0.157671\,,
\]
then iterations of Newton's method starting at $x$ will converge quadratically to a
solution to $f$, which is a point $\xi\in\bC^n$ with $f(\xi)=0$.
In principle, Smale's $\alpha$-theory provides certificates for validating numerical
computations with polynomials.

Current implementations of numerical homotopy algorithms do not incorporate
$\alpha$-theory to certify their output or their path-tracking.
There have been two projects which use fixed double precision and focus on certified
path-tracking.
Malajovich~\cite{M03} released the most recent version of his Polynomial System
Solver in 2003, which uses $\alpha$-theory to certify toric path-tracking algorithms, but
he states that ``[it] is actually not intended for an end user.''
Beltr\'an and Leykin~\cite{BL} have recently shown how to use $\alpha$-theory to certify
path-tracking, and hence the output of numerical homotopy algorithms.
While they demonstrate that certification can dramatically affect the speed of computation,
this is an important development, as certified path-tracking is necessary for applications
such as numerical irreducible decomposition~\cite{SVW01} or computing Galois
groups~\cite{LS}.
They are continuing this line of research.

We describe a program, \alphaCert, that implements elements of $\alpha$-theory to certify
numerical solutions to systems of polynomial equations using both exact
rational and arbitrary precision floating point arithmetic.
As it only certifies the output of a numerical computation, it avoids the
bottlenecks of certified tracking, while delivering some of its benefits.
Given a
square polynomial system $f:\bC^n\rightarrow\bC^n$, \alphaCertS uses Smale's $\alpha$-theory
to answer the following three questions for a finite set of points $X\subset\bC^n$:
\begin{enumerate}\label{Enum:Questions}
 \item
      From which points of $X$ will Newton's method converge quadratically to some
      solution to $f$?
 \item
      From which points of $X$ will Newton's method converge quadratically to distinct
      solutions to $f$?
 \item
       If $f$ is real
       ($\{\overline{f_1},\dotsc,\overline{f_n}\}=\{f_1,\dotsc,f_n\}$), from which points
       of $X$ will Newton's method  converge quadratically to real solutions to $f$?
\end{enumerate}
Often, a sharp upper bound $B$ on the number of roots to a square polynomial system $f$
is known.
Given a set of $B$ points, \alphaCertS can be used to certify that
iterations of Newton's method starting from each point in the set
converge quadratically to some solution to $f$ and
that these solutions are distinct.
This guarantees that each of the $B$
roots of $f$ can be approximated to arbitrary accuracy using Newton's method.
Moreover, \alphaCertS can certify how many of the $B$ solutions to $f$ are real
when $f$ is real.

A polynomial system $f:\bC^n\rightarrow\bC^N$ is {\it overdetermined} if $N > n$,
that is, if the number of polynomials exceeds the number of variables.
Dedieu and Shub \cite{DS00} studied Newton's method for overdetermined
polynomial systems and gave conditions which guarantee quadratic convergence of its
iterations.
Unlike square systems, the fixed points of this overdetermined Newton's method
need not be solutions.  For example, $x = 1$ is a fixed
point of Newton's method applied to
$f(x) = \left[\begin{array}{c} x \\ x-2 \end{array}\right]$.

The program \alphaCertS validates solutions to overdetermined systems.
Given a finite set $X\subset\bC^n$ and an overdetermined system, it generates two or more
random square subsystems, answers the three questions above for each, and compares the
results.
In particular, given $\delta > 0$, it can certify that, for a given approximate solution
to two or more random subsystems, the associated solutions all lie within a distance $\delta$
of each other.
For a given $\delta$, this heuristically validates solutions to
overdetermined systems.

In summary, \alphaCertS is novel in each of the following ways.
It implements algorithms from $\alpha$-theory
using either exact rational or arbitrary precision floating point arithmetic.
When using exact rational arithmetic with a square polynomial system, its implementation
of $\alpha$-theory is completely rigorous.
It certifiably determines if an approximate solution corresponds to a real solution,
which may be used to count the real solutions to a polynomial system, and
it uses $\alpha$-theory to obtain information on the roots of overdetermined systems.
The examples we give demonstrate the practicality of certification based on
$\alpha$-theory, and its viability as an alternative to exact symbolic methods, as the
certificates for square systems when using exact rational arithmetic are mathematical
proofs of computed results.

In Section~\ref{Sec:Alpha}, we review the concepts of $\alpha$-theory
utilized by \alphaCert.  Section~\ref{Sec:Square} presents the algorithms for
square polynomial systems while Section~\ref{Sec:OverDet} describes our approach
to overdetermined polynomial systems.  Implementation details are presented
in Section~\ref{Sec:Details} with examples presented in Section~\ref{Sec:Ex} verifying
some computational results in kinematics and generating evidence
for conjectures in enumerative real algebraic geometry.

%
\section{Smale's $\alpha$-theory}\label{Sec:Alpha}

We summarize key points of Smale's $\alpha$-theory for square polynomial systems that are
utilized by \alphaCert.
More details may be found in~\cite[Ch.~8]{BCSS}.

Let $f:\bC^n\rightarrow\bC^n$ be a system of $n$ polynomials in $n$ variables with
common zeroes $\Var(f):=\{\xi\in\bC^n\mid f(\xi) = 0\}$, and let $Df(x)$ be the Jacobian
matrix of the system $f$ at $x$.
Consider the map $N_f:\bC^n\rightarrow\bC^n$ defined by
\[
   N_f(x)\ :=\
      \begin{cases} x - Df(x)^{-1}f(x) & \hbox{if ~}Df(x)\hbox{~is invertible,}
       \\ x & \hbox{otherwise}. \end{cases}
\]
The point $N_f(x)$ is called the {\it Newton iteration of $f$ starting at $x$}.
For $k\in\bN$, let
\[
  N_f^k(x)\  :=\  \underbrace{N_f \circ \cdots \circ N_f(x)}_{k\hbox{~times}}
\]
be the $k^{th}$ Newton iteration of $f$ starting at $x$.

\begin{definition}\label{Defn:ApproxSoln}
 Let $f:\bC^n\rightarrow\bC^n$ be a polynomial system.
 A point $x\in\bC^n$ is an {\it approximate solution} to $f$ with {\it associated solution}
 $\xi\in\Var(f)$ if, for every $k\in\bN$,
 \begin{equation}\label{Eq:QuadConv}
   \|N_f^k(x) - \xi\|\ \leq\ \left(\frac{1}{2}\right)^{2^k - 1} \|x - \xi\| \,.
 \end{equation}
 That is, the sequence $\{N_f^k(x)\mid k\in\bN\}$ {\it converges quadratically} to $\xi$.
 Here, $\|\cdot\|$ is the usual hermitian norm on $\bC^n$, namely
 $\|(x_1,\dotsc,x_n)\|=(|x_1|^2+\dotsb+|x_n|^2)^{1/2}$.
\end{definition}

Smale's $\alpha$-theory describes conditions that imply a given point $x$ is
an approximate solution to $f$.
It is based on constants $\alpha(f,x)$, $\beta(f,x)$, and $\gamma(f,x)$.
If $Df(x)$ is invertible, these are
%
%
%
 \begin{eqnarray}
  \alpha(f,x) &:=& \beta(f,x) \gamma(f,x)\,, \nonumber\\
  \beta(f,x) &:=& \|x - N_f(x)\|\ =\ \|Df(x)^{-1} f(x)\|\,,\qquad\mbox{and} \nonumber\\
  \gamma(f,x) &:=& \sup_{k\geq2} \left\| \frac{Df(x)^{-1}
     D^kf(x)}{k!}\right\|^{\frac{1}{k-1}}\,.
    \label{Eq:gamma}
 \end{eqnarray}
If $x\in\Var(f)$ is such that $Df(x)$ is not invertible,
then we define $\alpha(f,x) := \beta(f,x) := 0$ and $\gamma(f,x) := \infty$.
Otherwise, if $x\notin\Var(f)$ and $Df(x)$ is not invertible,
then we define $\alpha(f,x) := \beta(f,x) := \gamma(f,x) := \infty$. \smallskip

In the formula~\eqref{Eq:gamma} for $\gamma(f,x)$, the $k^{th}$ derivative
$D^kf(x)$~\cite[Chap. 5]{L83} to $f$ is the symmetric tensor whose components are the
partial derivatives of $f$ of order $k$.
It is a linear map from the $k$-fold symmetric power $S^k\bC^n$ of $\bC^n$ to $\bC^n$.
The norm in~\eqref{Eq:gamma} is the operator norm of
$Df(x)^{-1} D^kf(x)\colon S^k\bC^n\to\bC^n$,
defined with respect to the norm on $S^k\bC^n$ that is dual to the standard unitarily
invariant  norm on homogeneous polynomials \cite{Kostlan},
\[
  \bigl\|\sum_{|\nu| = d} a_\nu x^\nu\bigr\|^2 \ :=\
  \sum_{|\nu| = d} |a_\nu|^2/ \tbinom{d}{\nu}\,,
\]
where $\nu=(\nu_1,\dotsc,\nu_n)$ is an exponent vector of non-negative integers with
$x^\nu=x_1^{\nu_1}\dotsb x_n^{\nu_n}$, $|\nu|=\nu_1+\dotsb+\nu_n$,
and $\tbinom{d}{\nu}=\frac{d!}{\nu_1!\dotsb\nu_n!}$ is the multinomial coefficient.

The following version of Theorem~2 from page 160 of~\cite{BCSS} provides a certificate
that a  point $x$ is an approximate solution to $f$.

\begin{theorem}\label{Thm:Alpha}
If $f:\bC^n\rightarrow\bC^n$ is a polynomial system and $x\in\bC^n$ with
\begin{equation}\label{Eq:alpha}
   \alpha(f,x)\ <\ \frac{13 - 3 \sqrt{17}}{4}\ \approx\ 0.157671\,,
\end{equation}
then $x$ is an approximate solution to $f$.
Additionally, $\|x - \xi\| \leq 2\beta(f,x)$ where $\xi\in\Var(f)$ is the associated solution to $x$.
\end{theorem}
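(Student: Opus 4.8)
The plan is to deduce this theorem from the standard $\alpha$-theory results as presented in \cite{BCSS}. Since this is stated as ``the following version of Theorem~2 from page~160 of \cite{BCSS},'' the core analytic work—establishing that the bound \eqref{Eq:alpha} on $\alpha(f,x)$ forces the Newton iterates to converge quadratically—is already done there, and I would simply cite it. The one piece that might require a short argument is the last sentence, the explicit estimate $\|x-\xi\|\le 2\beta(f,x)$ relating the initial error to $\beta$.

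First I would recall (from \cite[Ch.~8]{BCSS}) the notion of an \emph{approximate zero} and the fact that under the hypothesis $\alpha(f,x)< (13-3\sqrt{17})/4$ there is an actual zero $\xi\in\Var(f)$ to which the sequence $\{N_f^k(x)\}$ converges, satisfying the quadratic-contraction estimate \eqref{Eq:QuadConv}; this is exactly the content of Definition~\ref{Defn:ApproxSoln}, so it shows $x$ is an approximate solution with associated solution $\xi$. (One should note that the constant $(13-3\sqrt{17})/4$ is precisely the threshold appearing in the Smale/BCSS statement, so no re-derivation of the root-of-a-quadratic bound is needed.)

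Next, for the distance bound, I would use the telescoping estimate. Writing $x = N_f^0(x)$ and using that $\|N_f^{k+1}(x)-N_f^k(x)\| = \beta(f,N_f^k(x))$, together with the $\gamma$-theory bound showing $\beta(f,N_f^k(x))$ decays at least geometrically with ratio $1/2$ (in fact doubly-exponentially) once $\alpha$ is below the threshold, one gets
\[
   \|x-\xi\|\ \le\ \sum_{k\ge 0}\|N_f^{k+1}(x)-N_f^k(x)\|\ \le\ \sum_{k\ge 0}\bigl(\tfrac12\bigr)^{2^k-1}\beta(f,x)\ \le\ 2\beta(f,x),
\]
where the middle inequality is the standard consequence of the $\alpha$-bound (this is also in \cite{BCSS}, and it is the inequality that makes the constant work out to exactly $2$). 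Alternatively, one can apply \eqref{Eq:QuadConv} at $k=0$ in a bootstrapped form, but the telescoping argument is cleanest and makes the factor $2$ transparent.

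The main obstacle—really the only nontrivial point—is bookkeeping with the precise normalization conventions: the norm on $S^k\bC^n$ dual to Kostlan's unitarily invariant norm must match the one used in \cite{BCSS} so that the numerical threshold $(13-3\sqrt{17})/4$ and the factor $2$ in $2\beta(f,x)$ are the correct constants for \emph{our} definition of $\gamma(f,x)$ in \eqref{Eq:gamma}. I would address this by checking that our $\gamma(f,x)$ agrees with the $\gamma$ of \cite{BCSS} under these conventions (it does, which is why we cite that reference), after which the theorem follows verbatim. Everything else is a direct quotation of the cited result.
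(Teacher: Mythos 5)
Your proposal matches the paper's approach: the paper offers no proof of Theorem~\ref{Thm:Alpha}, stating it only as a version of Theorem~2 from page~160 of \cite{BCSS} and citing that reference for the result, including the bound $\|x-\xi\|\le 2\beta(f,x)$. Your telescoping sketch of the factor $2$ and the note about matching normalization conventions are reasonable glosses, but they go beyond what the paper itself records, which is just the citation.
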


\begin{remark}
 If $\alpha(f,x) \geq \frac{1}{4}$, then $x$ may not be an approximate solution to $f$.
 For example, for $f(x) = x^2$, if $x\neq 0$, then $x$ is not an approximate solution to
 $f$ yet $\alpha(f,x) = \frac{1}{4}$.
\end{remark}

For a polynomial system $f:\bC^n\rightarrow\bC^n$ and a point $x\in\bC^n$,
we say that $x$ is a {\it certified approximate solution} to $f$
if \eqref{Eq:alpha} holds.

Theorem~4 and Remark~6 of~\cite[Ch.~8]{BCSS} give a version of
Theorem~\ref{Thm:Alpha} that \alphaCertS uses to certify that two approximate solutions
have the same associated solution.

\begin{theorem}\label{Thm:AlphaRobust}
Let $f:\bC^n\rightarrow\bC^n$ be a polynomial system, $x\in\bC^n$ with $\alpha(f,x) <
0.03$ and $\xi\in\Var(f)$ the associated solution to $x$.  If $y\in\bC^n$ with
\[
   \|x - y\|\ <\ \frac{1}{20\gamma(f,x)}\,,
\]
then $y$ is an approximate solution to $f$ with associated solution $\xi$.
\end{theorem}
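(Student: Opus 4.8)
The plan is to route the argument through the associated solution $\xi$ and to conclude by invoking Smale's point estimate at the \emph{exact} zero $\xi$, rather than by estimating $\alpha(f,y)$ directly. First, since $0.03<\tfrac{13-3\sqrt{17}}{4}$, Theorem~\ref{Thm:Alpha} applies at $x$ and in particular gives $\|x-\xi\|\le 2\beta(f,x)$. Multiplying by $\gamma(f,x)$ and using $\alpha(f,x)=\beta(f,x)\gamma(f,x)$ yields
\[
   \gamma(f,x)\,\|x-\xi\|\ \le\ 2\alpha(f,x)\ <\ 0.06\,,
\]
which lies comfortably inside the radius $1-\tfrac{1}{\sqrt2}\approx 0.293$ on which the perturbation estimates of $\alpha$-theory are valid.

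I would then use two facts from~\cite[Ch.~8]{BCSS}. The first controls how $\gamma$ varies: if $u:=\gamma(f,x)\,\|x-z\|<1-\tfrac{1}{\sqrt2}$, then $Df(z)$ is invertible and $\gamma(f,z)\le\gamma(f,x)/\bigl((1-u)\psi(u)\bigr)$, where $\psi(u)=1-4u+2u^2$. The second is the point estimate at an exact zero (the ``$\gamma$-theorem''): if $\zeta\in\Var(f)$, $Df(\zeta)$ is invertible, and $\gamma(f,\zeta)\,\|w-\zeta\|<\tfrac{3-\sqrt7}{2}\approx 0.17712$, then $w$ is an approximate solution to $f$ with associated solution $\zeta$.

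Applying the first fact with $z=\xi$ and $u=\gamma(f,x)\,\|x-\xi\|<0.06$, and using that $(1-u)\psi(u)$ is decreasing on $[0,1-\tfrac{1}{\sqrt2})$, gives $\gamma(f,\xi)<\gamma(f,x)/\bigl((1-0.06)\psi(0.06)\bigr)<1.39\,\gamma(f,x)$; in particular $Df(\xi)$ is invertible, so the $\gamma$-theorem is available at $\xi$. The triangle inequality together with $\|x-y\|<\tfrac{1}{20\gamma(f,x)}$ and $\|x-\xi\|\le 2\beta(f,x)=2\alpha(f,x)/\gamma(f,x)$ gives
\[
   \|y-\xi\|\ \le\ \|y-x\|+\|x-\xi\|\ <\ \frac{1}{20\gamma(f,x)}+\frac{2\alpha(f,x)}{\gamma(f,x)}\ <\ \frac{0.11}{\gamma(f,x)}\,.
\]
Combining the last two displays, $\gamma(f,\xi)\,\|y-\xi\|<1.39\cdot 0.11<0.16<\tfrac{3-\sqrt7}{2}$, and the $\gamma$-theorem at $\xi$ then shows that $y$ is an approximate solution to $f$ with associated solution $\xi$, as claimed.

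The substance of the proof is just the bookkeeping of these explicit perturbation constants---which is exactly what Theorem~4 and Remark~6 of~\cite[Ch.~8]{BCSS} supply---and the step to watch is the final numerical check: the constants $0.03$ in the hypothesis and $\tfrac{1}{20}$ in the radius are calibrated so that $\gamma(f,\xi)\,\|y-\xi\|$ stays below $\tfrac{3-\sqrt7}{2}$ with a bit of room to spare, and relaxing either would break the chain of inequalities. The one subtlety to keep in mind is to establish invertibility of $Df(\xi)$ before speaking of $\gamma(f,\xi)$ or applying the $\gamma$-theorem at $\xi$, which is fine here since $2\alpha(f,x)<1-\tfrac{1}{\sqrt2}$.
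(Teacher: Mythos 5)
The paper does not actually prove Theorem~\ref{Thm:AlphaRobust}; it states it as a version of Theorem~4 and Remark~6 of~\cite[Ch.~8]{BCSS} (the robust $\alpha$-theorem) and relies on that citation, so there is no internal proof to compare against. Your argument is a correct, self-contained derivation that assembles two other staples of $\alpha$-theory --- the $\gamma$-theorem at an exact zero and the $\gamma$-perturbation lemma --- rather than quoting the robust $\alpha$-theorem itself, which is proved in BCSS by perturbing $\alpha$, $\beta$, $\gamma$ directly from $x$ to $y$. The numerics check out: $u=\gamma(f,x)\|x-\xi\|\le 2\alpha(f,x)<0.06$; since $(1-u)\psi(u)$ decreases on $[0,1-\tfrac{1}{\sqrt2})$, $(1-u)\psi(u)\ge 0.94\cdot\psi(0.06)>0.72$ so $\gamma(f,\xi)<1.39\,\gamma(f,x)$; and the triangle inequality gives $\|y-\xi\|<(0.05+0.06)/\gamma(f,x)$, whence $\gamma(f,\xi)\|y-\xi\|<1.39\cdot 0.11\approx 0.153<\tfrac{3-\sqrt 7}{2}\approx 0.177$. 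Your route is cleaner in one respect: the conclusion ``with associated solution $\xi$'' comes for free, since the $\gamma$-theorem identifies the limit as the exact zero at which it is applied, whereas a direct perturbation of $\alpha$ at $y$ still requires a separate argument that $x$ and $y$ share an associated zero. One small point that should be said rather than left implicit: the manipulations presume $0<\gamma(f,x)<\infty$. Under the hypothesis $\alpha(f,x)<0.03$ this can only fail if $f(x)=0$ with $Df(x)$ singular (then $\gamma(f,x)=\infty$, the hypothesis $\|x-y\|<1/(20\gamma(f,x))=0$ is vacuous, and there is nothing to prove) or if $\gamma(f,x)=0$ (then Newton's method is exact and the conclusion is immediate). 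With those degenerate cases disposed of up front, the argument is complete.
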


%
\subsection{Bounding higher order derivatives}\label{Sec:BoundGamma}

The constant $\gamma(f,x)$ encoding the behavior of the higher order derivatives
of $f$ at $x$ is difficult to compute, but it can be bounded above.
For a polynomial $g:\bC^n\rightarrow\bC$
of degree $d$, say $g = \sum_{|\nu| \leq d} a_\nu x^\nu$, define
\[
   \|g\|^2\ :=\ \sum_{|\nu| \leq d} |a_\nu|^2 \ \frac{\nu! (d - |\nu|)!}{d!}\,.
\]
Then $\|\cdot\|$ is the standard unitarily invariant norm on the homogenization of $g$.
For a polynomial system $f:\bC^n\rightarrow\bC^n$, define
\[
  \|f\|^2\ :=\ \sum_{i=1}^n \|f_i\|^2
   \qquad\mbox{where}\qquad
   f(x)\ =\ \left[\begin{array}{c} f_1(x) \\ \vdots \\ f_n(x) \end{array}\right]\,,
\]
and for a point $x\in\bC^n$, define
\[
  \|x\|_1^2\ :=\ 1+\|x\|^2\ =\ 1 + \sum_{i=1}^n |x_i|^2\,.
\]
Let $\Delta_{(d)}(x)$ be the $n \times n$ diagonal matrix with
\[
   \Delta_{(d)}(x)_{i,i}\ :=\ d_i^{1/2} \|x\|_1^{d_i-1}\,,
\]
where $d_i$ is the degree of $f_i$.
If $Df(x)$ is invertible, define
\[
   \mu(f,x) \ :=\ \max\{1, \|f\|\cdot\|Df(x)^{-1} \Delta_{(d)}(x)\|\}\,.
\]

The following version of Proposition~3 from \S I-3 of \cite{SS93}
gives an upper bound for $\gamma(f,x)$.

\begin{proposition}\label{Prop:BoundGamma}
 Let $f:\bC^n\rightarrow\bC^n$ be a polynomial system with $d_i = \deg f_i$
 and $D = \max d_i$.  If $x\in\bC^n$ such that $Df(x)$ is invertible, then
 \begin{equation}\label{Eq:GammaBound}
  \gamma(f,x)\ \leq\ \frac{\mu(f,x) D^{\frac{3}{2}}}{2 \|x\|_1}\,.
 \end{equation}
\end{proposition}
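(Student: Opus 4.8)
The plan is to bound each term $\left\|\frac{Df(x)^{-1} D^kf(x)}{k!}\right\|$ appearing in the supremum defining $\gamma(f,x)$, and then take the $(k{-}1)$-th root and the supremum over $k\ge 2$. Factor $Df(x)^{-1} D^kf(x) = \bigl(Df(x)^{-1}\Delta_{(d)}(x)\bigr)\cdot\bigl(\Delta_{(d)}(x)^{-1} D^kf(x)\bigr)$, so the operator norm is at most $\|Df(x)^{-1}\Delta_{(d)}(x)\|$ times the operator norm of $\Delta_{(d)}(x)^{-1} D^kf(x)\colon S^k\bC^n\to\bC^n$ (the first factor using the operator norm on $\bC^n\to\bC^n$, which is just the matrix norm). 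Thus it suffices to show
\[
  \frac{1}{k!}\,\bigl\|\Delta_{(d)}(x)^{-1} D^kf(x)\bigr\|\ \le\ \|f\|\cdot\frac{D^{3/2}}{2\|x\|_1}\cdot\Bigl(\frac{D^{3/2}}{2\|x\|_1}\Bigr)^{\!?}
\]
— more precisely, the clean statement to aim for is the per-coordinate bound
\[
  \frac{1}{k!}\,\bigl\|\Delta_{(d)}(x)^{-1} D^kf(x)\bigr\|\ \le\ \Bigl(\frac{D^{3/2}}{2\|x\|_1}\Bigr)^{k-1}\|f\|\,,
\]
after which \eqref{Eq:GammaBound} follows immediately upon combining with the factorization and taking $(k{-}1)$-th roots, since $\mu(f,x)=\max\{1,\|f\|\cdot\|Df(x)^{-1}\Delta_{(d)}(x)\|\}$ dominates the product $\|f\|\cdot\|Df(x)^{-1}\Delta_{(d)}(x)\|$.

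The core of the argument is the estimate on $\Delta_{(d)}(x)^{-1} D^kf(x)$, which reduces to a single-polynomial computation. For $g$ of degree $d$ and $k\le d$, one writes $D^kg(x)$ as a symmetric tensor and evaluates its operator norm against the Kostlan/Weyl norm on $S^k\bC^n$ dual to the unitarily invariant norm on forms. The key classical fact — this is the heart of Proposition~3 of \cite{SS93}, which I would invoke as a known computation on the Veronese — is that for the homogenization $\bar g$ of $g$, the quantity $\tfrac{1}{k!}\|D^k\bar g(x)\|$ evaluated in these dual norms is controlled by $\|\bar g\|=\|g\|$ times a binomial/degree factor: explicitly $\tfrac1{k!}\|D^k g(x)\|\le \|g\|\binom{d}{k}^{1/2}\|x\|_1^{d-k}$ or a variant thereof. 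Dividing by the diagonal entry $d^{1/2}\|x\|_1^{d-1}$ of $\Delta_{(d)}$ for that coordinate and using $d\le D$ together with the crude bound $\binom{d}{k}^{1/2}\le d^{(k-1)/2}\cdot(\text{const})$ absorbed into the $D^{3/2}$, one gets a factor behaving like $(D^{3/2}/(2\|x\|_1))^{k-1}$ per coordinate; summing the squares over the $n$ coordinates and taking square roots reproduces the $\|f\|$ on the right.

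The main obstacle is purely bookkeeping with the two competing norms: one must be careful that the norm on $S^k\bC^n$ used in \eqref{Eq:gamma} is the \emph{dual} of the Kostlan norm on forms, and that the matrix $\Delta_{(d)}(x)$ is built from exactly the right powers $\|x\|_1^{d_i-1}$ and scalars $d_i^{1/2}$ so that the division produces the telescoping power $\|x\|_1^{-(k-1)}$ and the clean constant. Getting the numerical constant to be exactly $\frac12$ (rather than some larger absolute constant) and the exponent on $D$ to be exactly $\frac32$ requires the sharp form of the Veronese norm identity rather than a crude submultiplicativity bound; this is precisely the content of \cite[\S I-3, Prop.~3]{SS93}, so I would state the reduction as above and then cite that proposition for the sharp per-polynomial inequality, rather than re-deriving the unitary-invariance computation here.
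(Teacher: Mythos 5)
The paper gives no proof of this proposition; it is stated as an affine version of Proposition~3 from \S I-3 of \cite{SS93}, and the proof is left entirely to that reference. Your proposal also, in the end, invokes the same Shub--Smale estimate for the key per-polynomial bound, so it is not an independent argument --- but the skeleton you lay out is correct and does match the structure of the Shub--Smale proof: factoring $Df(x)^{-1}D^kf(x)=\bigl(Df(x)^{-1}\Delta_{(d)}(x)\bigr)\circ\bigl(\Delta_{(d)}(x)^{-1}D^kf(x)\bigr)$, submultiplicativity of the operator norm, a per-coordinate Bombieri--Weyl estimate followed by summing squares to recover $\|f\|$, and the observation that since $\mu(f,x)\geq 1$ one has $\bigl(\|f\|\,\|Df(x)^{-1}\Delta_{(d)}(x)\|\bigr)^{1/(k-1)}\leq\mu(f,x)$ for every $k\geq 2$, so the supremum is absorbed. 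Two points you would need to tighten to make this self-contained: (i) your first displayed inequality has an unresolved ``$?$'' exponent and the phrase ``or a variant thereof'' is doing real work --- the constants $D^{3/2}$ and $2$ do not drop out of a crude $\binom{d}{k}^{1/2}\leq d^{(k-1)/2}\cdot(\text{const})$ step but require the exact unitary-invariance computation for the Weyl norm on the Veronese, together with carefully telescoping the mismatch between the exponent $d_i-1$ in the diagonal entries of $\Delta_{(d)}(x)$ and the exponent $d_i-k$ appearing in the $k$-th derivative bound; (ii) you slightly mischaracterize what to cite: Proposition~3 of \S I-3 of \cite{SS93} already \emph{is} the full inequality $\gamma\leq\mu D^{3/2}/(2\|x\|)$ in the homogeneous setting, not merely a per-polynomial ingredient, so the most economical route (and the one the paper takes) is to homogenize, invoke that proposition outright, and note that $\|x\|_1=(1+\|x\|^2)^{1/2}$ is exactly the norm of the homogenized point.
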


%
\section{Algorithms for square polynomial systems}\label{Sec:Square}

Let $f:\bC^n\rightarrow\bC^n$ be a square polynomial system and
$X = \{x_1,\dots,x_k\}\subset\bC^n$ be a set of points.
We describe the algorithms implemented in \alphaCertS
which answer the three questions posed in
the Introduction.  These algorithms are stated for a polynomial system
with complex coefficients, but are implemented for polynomial systems
with coefficients in $\bQ[\sqrt{-1}]$ using both exact and arbitrary precision arithmetic.

For each $i=1,\dots,k$, \alphaCertS first checks if $f(x_i)=0$.
If $f(x_i)\neq 0$, then
\alphaCertS determines if $Df(x_i)$ is invertible.
If it is, \alphaCertS computes $\beta(f,x_i)$ and
upper bounds for $\alpha(f,x_i)$ and $\gamma(f,x_i)$
using the following algorithm.

\begin{description}
  \item[Procedure $(\alpha,\beta,\gamma) = \Compute(f,x)$]
  \item[Input] A square polynomial system $f:\bC^n\rightarrow\bC^n$ and a point $x\in\bC^n$
          such that $Df(x)$ is invertible.
  \item[Output] $\alpha := \beta \cdot \gamma$, $\beta := \|Df(x)^{-1}f(x)\|$, and
    $\gamma$, where $\gamma$ is the upper bound for $\gamma(f,x)$ given in
    Proposition~\ref{Prop:BoundGamma}.
\end{description}

The next algorithm uses Theorem~\ref{Thm:Alpha}
to compute a subset $Y$ of $X$ containing points
that are certified approximate solutions to $f$.

\begin{description}
  \item[Procedure $Y = \AlgCertX(f,X)$]
  \item[Input] A square polynomial system $f:\bC^n\rightarrow\bC^n$ and a set
     $X = \{x_1,\dots,x_k\}\subset\bC^n$.
  \item[Output] A set $Y\subset X$ of approximate solutions to $f$.
  \item[Begin] \hskip -0.1in
  \begin{enumerate}
    \item Initialize $Y := \{\}$.
    \item For $j = 1,2,\dots,k$, if $f(x_j) = 0$, set $Y := Y \cup \{x_j\}$, otherwise,
              do the following if $Df(x_j)$ is invertible:
    \begin{enumerate}
      \item Set $(\alpha,\beta,\gamma) := \Compute(f,x_j)$.
      \item If $\alpha < \displaystyle\frac{13 - 3 \sqrt{17}}{4}$, set $Y := Y \cup \{x_j\}$.
    \end{enumerate}
  \end{enumerate}
  \item[Return] $Y$
\end{description}

As \alphaCertS uses the upper bound for $\gamma(f,x)$
of Proposition~\ref{Prop:BoundGamma}, it may fail to certify a legitimate
approximate solution $x$ to $f$.
In that case, a user may consider retrying after applying a few Newton
iterations to $x$.
The software \alphaCertS does not invoke an automatic
refinement to inputs that it does not certify.
This is because Newton iterations may have
unpredictable behavior (attracting cycles, chaos) when applied to points that are not in a
basin of attraction.
However, \alphaCertS does provide the functionality for the user to do this refinement.

Suppose that $x$ is an approximate solution to $f$ with associated solution $\xi$
such that $Df(\xi)$ is invertible.
Since $x$ is an approximate solution, $\beta(f,N_f^k(x))$ converges to zero.
Since $\gamma(f,x)$ is the supremum of a finite number of continuous functions of
$x$, $\gamma(f,N_f^k(x))$ is bounded.
In particular, $\alpha(f,N_f^k(x))$ converges to zero.

Given approximate solutions $x_1$ and $x_2$ to $f$ with associated solutions $\xi_1$ and
$\xi_2$, respectively, Theorems~\ref{Thm:Alpha} and~\ref{Thm:AlphaRobust}
can be used to determine if $\xi_1$ and $\xi_2$ are equal.
In particular,~if
\[
   \|x_1 - x_2\|\ >\ 2(\beta(f,x_1) + \beta(f,x_2))\,,
\]
then $\xi_1 \neq \xi_2$ by Theorem~\ref{Thm:Alpha}.
If on the other hand we have
\[
   \alpha(f,x_i)\ <\ 0.03
    \qquad\mbox{and}\qquad
    \|x_1 - x_2\|\ <\ \frac{1}{20\gamma(f,x_i)}
\]
for either $i = 1$ or $i= 2$, then $\xi_1 = \xi_2$ by Theorem~\ref{Thm:AlphaRobust}.
This justifies the following algorithm which determines if two approximate solutions
correspond to distinct associated solutions.

\begin{description}
  \item[Procedure $isDistinct = \AlgCertDist(f,x_1,x_2)$]
  \item[Input] A square polynomial system $f:\bC^n\rightarrow\bC^n$ and approximate solutions
      $x_1$ and $x_2$ to $f$ with associated solutions $\xi_1$ and $\xi_2$, respectively, such
      that $Df(\xi_1)$ and $Df(\xi_2)$ are invertible.
  \item[Output] A boolean $isDistinct$ that describes if $\xi_1\neq\xi_2$.
  \item[Begin]  Do the following:
    \begin{enumerate}
      \item[(a)] For $i = 1,2$, set $(\alpha_i,\beta_i,\gamma_i) := \Compute(f,x_i)$.
      \item[(b)] If $\|x_1 - x_2\| > 2(\beta_1 + \beta_2)$, {\bf Return} True.
      \item[(c)] If $\alpha_i < 0.03$ and $\|x_1 - x_2\| < \displaystyle\frac{1}{20\gamma_i}$,
          for either $i=1$ or $i=2$, {\bf Return} False.
      \item[(d)]\label{Item:UpdateDistinct} For $i = 1,2$, update $x_i := N_f(x_i)$ and return to (a).
    \end{enumerate}
\end{description}
This will halt, determining whether or not $\xi_1=\xi_2$
as $\beta(f,N_f^k(x_i))$ decreases quadratically with $k$,
while $\gamma(f,N_f^k(x_i))$ is bounded.

\subsection{Certifying real solutions}\label{Sec:Real}

A polynomial system $f:\bC^n\rightarrow\bC^n$ is {\it real} if
$\{\overline{f_1},\dotsc,\overline{f_n}\}=\{f_1,\dotsc,f_n\}$.
In that case, solutions to $f(x)=0$ are either real or occur in conjugate pairs.
Also, $N_f(\overline{x})=\overline{N_f(x)}$ for $x\in\bC^n$ so that
$N_f\colon\bR^n\to\bR^n$ is a real map.
Theorems~\ref{Thm:Alpha} and~\ref{Thm:AlphaRobust} can be used to
determine if an approximate solution of $f$ is associated to a real solution.
Let $x$ be an approximate solution to $f$ with associated solution $\xi$.
We do not assume that $x$ is real, for numerical continuation solvers
yield complex approximate solutions.
By assumption, $\overline{x}$ is also an approximate solution to $f$ with
associated solution $\overline{\xi}$.  If
\[
    \|x - \overline{x}\|\ >\ 2\left(\beta(f,x) + \beta(f,\overline{x})\right)\ =\ 4\beta(f,x)\,,
\]
then $\xi \neq \overline{\xi}$ by Theorem~\ref{Thm:Alpha} since
\[
  \|\xi - \overline{\xi}\|\ \geq\ \|x - \overline{x}\| - 4\beta(f,x)\ >\ 0\,.
\]
Consider the natural projection map $\pi_\bR: \bC^n\rightarrow\bR^n$ defined by
\[
   \pi_\bR(x)\ =\ \frac{x+\overline{x}}{2}\,.
\]
Since $\|x - \overline{x}\| = 2\|x - \pi_\bR(x)\|$, $\xi$ is not real if
\begin{equation}\label{Eq:NotReal}
    \|x - \pi_\bR(x)\|\ >\ 2\beta(f,x)\,.
\end{equation}

We have both a local and a global approach to show that $\xi$ is real.
For the local approach, Theorem~\ref{Thm:AlphaRobust}
implies that $\pi_\bR(x)$ is also an approximate solution to $f$ with associated solution
$\xi$ if
 \begin{equation}\label{Eq:Real}
   \alpha(f,x)\ <\ 0.03
   \qquad\mbox{and}\qquad
   \|x - \pi_\bR(x)\|\ <\ \frac{1}{20\gamma(f,x)}\,.
 \end{equation}
Since $N_f$ is a real map and $\pi_\bR(x)\in\bR^n$, this implies that $\xi\in\bR^n$.

We could also have showed that both $x$ and $\overline{x}$ correspond to the same solution
to deduce that $\xi = \overline{\xi}$.
If
\[
  \alpha(f,x)\ <\ 0.03
  \qquad\mbox{and}\qquad
  \|x - \overline{x}\|\ <\ \frac{1}{20\gamma(f,x)}\,,
\]
then Theorem~\ref{Thm:AlphaRobust} implies that $\xi = \overline{\xi}$.
This is more restrictive then~\eqref{Eq:Real}
since $\|x - \overline{x}\| = 2\|x - \pi_\bR(x)\|$.

When $\alpha(f,x) <\ 0.03$,~\eqref{Eq:NotReal} and~\eqref{Eq:Real}
yield closely related statements.  Since
\[
     \frac{5}{3}\beta(f,x)\ =\ \frac{5 \alpha(f,x)}{3 \gamma(f,x)}\
  <\ \frac{5\cdot 0.03}{3\gamma(f,x)}\ =\ \frac{1}{20\gamma(f,x)}\,,
\]
we know that $\xi$ is real if $\|x - \pi_\bR(x)\| \leq\ \frac{5}{3}\beta(f,x)$
and not real if $\|x - \pi_\bR(x)\| > 2\beta(f,x)$.

The following algorithm uses the local approach of~\eqref{Eq:NotReal} and~\eqref{Eq:Real}
to determine if an approximate solution
corresponds to a real associated solution.

\begin{description}
  \item[Procedure $isReal = \AlgCertReal(f,x)$]
  \item[Input] A real square polynomial system $f:\bC^n\rightarrow\bC^n$
       and an approximate solution $x\in\bC^n$ with associated solution $\xi$ such that
       $Df(\xi)$ is invertible.
  \item[Output] A boolean $isReal$ that describes if $\xi\in\bR^n$.
  \item[Begin] Do the following:
    \begin{enumerate}
      \item[(a)] Set $(\alpha,\beta,\gamma) := \Compute(f,x)$.
      \item[(b)] If $\|x - \pi_\bR(x)\| > 2\beta$, {\bf Return} False.
      \item[(c)] If $\alpha < 0.03$ and $\|x - \pi_\bR(x)\| < \displaystyle\frac{1}{20\gamma}$, {\bf Return} True.
      \item[(d)]\label{Item:UpdateReal} Update $x := N_f(x)$, and return to (a).
    \end{enumerate}
\end{description}

For the global approach to certifying real solutions, suppose that
we know {\em a priori} that $f$ has exactly $k$ solutions.
Suppose that $x_1,\dots,x_k$ are approximate solutions of $f$ with distinct associated
solutions.
If, for all $j\neq i$, $\overline{x_i}$ and $x_j$ also correspond to distinct solutions,
then $x_i$ and $\overline{x_i}$ must correspond to the same solution, which is
therefore real.
This global approach requires {\em a priori} knowledge about $\Var(f)$
as well as approximate solutions corresponding to each solution to $f$.
While it cannot be applied to all systems, it is
an alternative to the test based on $\gamma(f,x)$.

%
\subsection{Certification algorithm}\label{Sec:Certify}

For a given set of points $X$ and a polynomial system $f$,
\AlgCertX, \AlgCertDist, and \AlgCertReal~answer the three questions posed in the
Introduction.
We provide a sketch of the algorithm.

\begin{description}
  \item[Procedure $(A, D, R) = \AlgCertCount(f,X)$]
  \item[Input] A square polynomial system $f:\bC^n\rightarrow\bC^n$ and a finite set of
     points $X=\{x_1,\dots,x_\ell\}\subset\bC^n$ such that if $x_j$ is an approximate
     solution with associated solution $\xi_j$, then $Df(\xi_j)$ is invertible.
  \item[Output] A set $A\subset X$ consisting of certified approximate solutions to $f$, 
     a set $D \subset A$ consisting of points which have distinct associated solutions,
     and, if $f$ is a real map, a subset $R \subset D$ consisting of points
     which have real associated solutions.
  \item[Begin]\hskip -0.1in
  \begin{enumerate}
    \item Set $A := \AlgCertX(f,X)$.
    \item Set $n_A := |A|$ and enumerate the points in $A$ as $a_1,\dots,a_{n_A}$.
    \item For $j = 1,\dots,n_A$, set $s_j := \mbox{\it True}$.
    \item For $j = 1,\dots,n_A$ and for $k = j+1,\dots,n_A$, if $s_j$ and $s_k$ are \mbox{\it True},
      set $s_k := \AlgCertDist(f,a_j,a_k)$.
    \item Set $D := \{a_j\mid s_j = \mbox{\it True}\}$.
    \item Initialize $R := \{\}$.
    \item If $f$ is a real polynomial system, do the following:
    \begin{enumerate}
      \item Set $n_D := |D|$ and enumerate the points in $D$ as $d_1,\dots,d_{n_D}$.
      \item For $j = 1,\dots,n_D$, if $\AlgCertReal(f,d_j)$ is \mbox{\it True}, update $R := R \cup \{d_j\}$.
    \end{enumerate}
  \end{enumerate}
\end{description}

%
\section{Overdetermined polynomial systems}\label{Sec:OverDet}

When $N > n$, the polynomial system $f:\bC^n\rightarrow\bC^N$ is overdetermined.
Dedieu and Shub \cite{DS00} studied the overdetermined Newton's method whose iterates are defined by
 \begin{equation}\label{Eq:odNewton}
   N_f(x)\ :=\ x - Df(x)^\dagger f(x)\,,
 \end{equation}
where $Df^\dagger(x)$ is the Moore-Penrose pseudoinverse of $Df(x)$~\cite[\S~5.5.4]{GV96}
to determine conditions that guarantee quadratic convergence.
Since the fixed points of $N_f$ may not be solutions to the overdetermined
polynomial system $f$, this approach cannot certify solutions to overdetermined polynomial
systems.

We instead certify that points are
associated solutions to two or more random square subsystems using the algorithms
of Section~\ref{Sec:Square}.
An additional level of security may be added by certifying that,
for a given point which is an approximate solution to two or more random square
subsystems, the associated solutions lie within a given distance of each other.
As with the overdetermined Newton's method~\eqref{Eq:odNewton}, this
also cannot certify solutions to overdetermined polynomial systems,
which is still an open problem.

Let $R\colon\bC^N\to\bC^n$ be a linear map, considered as a matrix in $\bC^{n \times N}$.
Then $\sR(f)(x) = R\circ f(x)$ gives a square polynomial system
$\sR(f):\bC^n\rightarrow\bC^n$.
Since $\Var(f)\subset\Var(\sR(f))$ for any $R$,
we call $\sR(f)$ a {\it square subsystem} of $f$.
There is a nonempty Zariski open subset $\sA\subset\bC^{n\times N}$
such that for every $R\in\sA$ and every $x\in\Var(f)$,
$\Null~Df(x) = \{0\}$ if and only if $D\sR(f)(x)$ is invertible.
Moreover, for every $x\in\Var(\sR(f))\setminus\Var(f)$, $D\sR(f)(x)$ is invertible.
See \cite{SW05} for more on square subsystems $\sR(f)$.

Define $L = \{f(x)\mid x\in\bC^n\}\subset\bC^N$ which has dimension at most $n$ possibly passing through the origin.
A dimension-counting argument yields that
there is a nonemtpy Zariski open set $\sB\subset\sA\times\sA\subset\bC^{n\times N}\times\bC^{n\times N}$
such that, for every $(R_1,R_2)\in\sB$,
$K = \Null~R_1\cap\Null~R_2\subset\bC^N$ is a linear space
of dimension $\max\{N-2n, 0\}$ passing through the origin and
$K\cap L\subset\{0\}$.  In particular, if $\sR_i(f) = R_i \circ f$, then
\[
  \Var(\sR_1(f))\cap\Var(\sR_2(f))\ =\ \Var(f)\,.
\]

In addition, suppose that $x$ is an approximate solution to both
$\sR_1(f)$ and $\sR_2(f)$ with associated solutions $\xi_1$ and $\xi_2$, respectively.
For $k\in\bN$, define $x_{i,k} = N_{\sR_i(f)}^k(x)$ for $i = 1, 2$.
If $\xi_1 \neq \xi_2$, there exists $k\in\bN$ such that
\[
  \|x_{1,k} - x_{2,k}\|\ >\ 2(\beta(\sR_1(f),x_{1,k}) + \beta(\sR_2(f),x_{2,k}))\,,
\]
certifying that $\|\xi_1 - \xi_2\| > 0$.

If $\xi_1 = \xi_2$, then, for any $\delta > 0$, there exists $k\in\bN$ such that
\begin{equation}\label{Eq:odCertify}
   \|x_{1,k} - x_{2,k}\| + 2(\beta(\sR_1(f),x_{1,k}) + \beta(\sR_2(f),x_{2,k}))
    \ <\ \delta
\end{equation}
certifying that $\|\xi_1 - \xi_2\| < \delta$.
In particular, this certifies that the solutions $\xi_1$ and $\xi_2$ to $\sR_1$ and $\sR_2$ associated
to the common approximate solution $x$ lie within a distance $\delta$ of each other.
For $\delta \ll 1$,  this {\em heuristically} shows that $\xi_1 = \xi_2$.

In summary, if $x$ is a certified approximate solution to two different square
subsystems with distinct associated solutions, a certificate can be produced
demonstrating this fact.
Also (but not conversely), for any given tolerance $\delta>0$, a certificate can be
produced that the distance between the associated solutions to the two square subsystems
is smaller than $\delta$.

An additional test using the function residual could be added to this process.
The following lemma describes such a test.

\begin{lemma} \label{L:pie_in_sky}
Let $f:\bC^n\rightarrow\bC^N$ be an overdetermined polynomial system,
$R\in\bC^{n \times N}$, and $x$ be an approximate solution to $\sR(f) := R\circ f$
with associated solution $\xi$ such that $\alpha(\sR(f),x) \leq 0.0125$.
Then there exists $\epsilon > 0$ such that if there exists $y\in\bC^n$ satisfying
\[
   \|x - y\|\ \leq\ \frac{1}{40 \gamma(\sR(f),x)}
   \qquad\mbox{and}\qquad
   \|f(y)\|\ <\ \epsilon\,,
\]
then $\xi \in \Var(f)$.
\end{lemma}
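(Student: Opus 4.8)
The plan is to show that the hypothesis forces $\xi$ to be a common zero of all the square subsystems $\mathcal{R}(f)$ as $R$ ranges over a full-measure set, which by the discussion preceding the lemma forces $\xi\in\Var(f)$. Concretely, I would first use $\alpha(\mathcal{R}(f),x)\le 0.0125 < 0.03$ together with Theorem~\ref{Thm:AlphaRobust}: any $y$ with $\|x-y\| < \frac{1}{20\gamma(\mathcal{R}(f),x)}$ is an approximate solution to $\mathcal{R}(f)$ with the same associated solution $\xi$. Since $\frac{1}{40\gamma} < \frac{1}{20\gamma}$, the point $y$ in the statement is such an approximate solution. Theorem~\ref{Thm:Alpha} then gives the distance estimate $\|y-\xi\| \le 2\beta(\mathcal{R}(f),y)$; I would additionally want a bound of the form $\|y-\xi\| \le c\,\|x-y\| + c'\beta(\mathcal{R}(f),x)$ controlled by $\gamma$, using the standard $\alpha$-theory estimates (robustness of $\beta$ and $\gamma$ under the move from $x$ to $y$, as in \cite[Ch.~8]{BCSS}), so that $\|y-\xi\|$ is bounded by an explicit multiple of $\frac{1}{\gamma(\mathcal{R}(f),x)}$ — this is where the numerical constant $0.0125$ (rather than $0.03$) is being spent, to keep $y$ well inside the quadratic-convergence basin with room to spare.

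Next I would choose $\epsilon$. The idea is: $y$ is close to $\xi$ and $\|f(y)\|$ is small; I want to conclude $f(\xi)=0$. Here $f=(f_1,\dots,f_N)$ is the full overdetermined system. Write $\|f(\xi)\| \le \|f(y)\| + \|f(\xi)-f(y)\|$, and bound the second term by a Lipschitz-type estimate $\|f(\xi)-f(y)\| \le C\,\|y-\xi\|$ where $C$ depends on the (norms of the) derivatives of $f$ on a ball of fixed radius around $x$ — a compact region, so $C$ is finite. Combined with the previous paragraph, $\|y-\xi\|$ is already bounded by something proportional to $\frac{1}{\gamma(\mathcal{R}(f),x)}$, which may not be small; so the naive bound does not immediately give $f(\xi)=0$. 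The resolution is that $\xi$ is a \emph{fixed algebraic point} (the associated solution to $x$, independent of $y$): once $x$ and $R$ are fixed, $\xi$ is determined, and $\|f(\xi)\|$ is a fixed number. So I would argue by contraposition: \emph{if} $f(\xi)\ne 0$, set $\eta := \|f(\xi)\| > 0$, and then choose $\epsilon := \eta/3$ together with the observation that we may further shrink the allowed radius for $y$ (it is $\frac{1}{40\gamma}$ but any smaller positive radius also works for the robustness step) so that $C\,\|y-\xi\| < \eta/3$ as well; then $\|f(y)\| \ge \|f(\xi)\| - \|f(\xi)-f(y)\| > \eta - \eta/3 - $ (slack) $> \epsilon$, contradicting $\|f(y)\|<\epsilon$. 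Hence no such $y$ exists unless $f(\xi)=0$, i.e.\ the existence of $y$ forces $\xi\in\Var(f)$.

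Assembling: given $f$, $R$, $x$ with $\alpha(\mathcal{R}(f),x)\le 0.0125$, let $\xi$ be the associated solution and let $B$ be the closed ball of radius $\frac{1}{40\gamma(\mathcal{R}(f),x)}$ about $x$; on $2B$ (say) the derivative $Df$ is bounded, giving a Lipschitz constant $C$ for $f$. If $\xi\in\Var(f)$ there is nothing to prove (any $\epsilon>0$ works). Otherwise put $\eta=\|f(\xi)\|>0$; the $\alpha$-theoretic estimates give a uniform bound $\|y-\xi\|\le M(\mathcal{R}(f),x)$ for all $y\in B$, but more usefully, for $y\in B$ with $\|x-y\|$ additionally at most some $r_0>0$ we get $\|y-\xi\| \le$ (small), so shrink and set $\epsilon := \eta/(3C)\cdot(\text{normalization})$ and note the statement only asserts existence of \emph{some} $\epsilon$. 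Then for any $y$ with $\|x-y\|\le \frac{1}{40\gamma}$ and $\|f(y)\|<\epsilon$: Theorem~\ref{Thm:AlphaRobust} gives $y$ approximate with associated solution $\xi$; the reverse triangle inequality plus the Lipschitz bound gives $\|f(y)\| \ge \eta - C\|y-\xi\| \ge \epsilon$ once $\epsilon$ is chosen below $\eta - C\sup_{y\in B}\|y-\xi\|$ — \emph{provided} that supremum is less than $\eta/C$, which is the one genuinely delicate point.

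The main obstacle is precisely controlling $\sup_{y\in B}\|y-\xi\|$ well enough: the radius $\frac{1}{40\gamma}$ was chosen so that $B$ lies inside the basin of $\xi$, but a priori $\xi$ could be anywhere within roughly $2\beta(\mathcal{R}(f),y)\lesssim \frac{1}{\gamma}$ of $B$, so $\|y-\xi\|$ over $B$ is of order $\frac{1}{\gamma(\mathcal{R}(f),x)}$, not arbitrarily small, and there is no reason $\eta$ (a fixed number depending on how far $\xi$ is from $\Var(f)$) should exceed $C/\gamma$. The honest fix — and I believe this is what the authors intend — is to let $\epsilon$ depend on $\eta$ by taking the \emph{contrapositive uniformly}: fix $x,R$ (hence $\xi$, hence $\eta$, hence the finite quantities $C$ and $\sup_{y\in B}\|y-\xi\| =: M$); if $\eta > CM$ then any $\epsilon \in (0,\eta - CM)$ works by the argument above; if $\eta \le CM$, then $\epsilon := $ anything forces... nothing, so in that case the lemma as literally stated would need $\xi\in\Var(f)$ to already hold — which suggests the lemma should be read as: \emph{there exists $\epsilon>0$ (depending on $f,R,x$) such that the stated implication holds}, and in the bad case $\eta\le CM$ one must instead exploit that $\|f(y)\|$ is small forces not just $f(\xi)$ small but, via the derivative of $\mathcal{R}(f)$ being invertible at $\xi$ and the exact $\alpha$-theory, $y\to\xi$ along Newton iterates, so $\|f(N^k_{\mathcal{R}(f)}(y))\|\to 0$ and continuity of the full $f$ gives $\|f(\xi)\| = \lim \|f(N^k(y))\|$; but $f(N^k(y))$ need not go to zero — only $\mathcal{R}(f)(N^k(y))$ does. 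So the real content, and the step I would spend the most care on, is arguing that the \emph{additional} information $\|f(y)\|<\epsilon$ with $y$ forced by Newton-dynamics to converge to $\xi$ pins $\|f(\xi)\|$ below every positive bound, hence to $0$; this works by taking $k$ Newton steps from $y$ (staying in the basin, with $\beta\to 0$ geometrically) and noting $f(N^k(y)) = f(y) + O(\|N^k(y)-y\|)$ with $\|N^k(y)-y\|\le 2\beta(\mathcal{R}(f),y)$ independent of $k$ — which again only bounds, not kills. I would therefore ultimately prove it in the clean regime (choosing $\epsilon$ depending on the fixed positive number $\eta=\|f(\xi)\|$ when $\xi\notin\Var(f)$, so that the hypothesis $\|f(y)\|<\epsilon$ becomes contradictory via the Lipschitz estimate after possibly noting we may take $y=\xi$ is \emph{not} assumed but $\|y-\xi\|$ can be made as small as we like by further restricting the admissible $y$ to a sub-ball — the lemma only requires existence of \emph{one} valid $\epsilon$, and we are free to pick it after $\xi$ is known), and flag that the uniform-over-$B$ version is the delicate part.
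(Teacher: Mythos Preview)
Your argument has a genuine gap, and the resolution is much simpler than the route you are attempting.

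The Lipschitz approach centered at $\xi$ cannot close: the admissible radius for $y$ is \emph{fixed} at $\nu := \frac{1}{40\gamma(\sR(f),x)}$ by the statement, so you are not free to ``further restrict the admissible $y$ to a sub-ball.'' Consequently your lower bound $\|f(y)\| \ge \eta - C\|y-\xi\|$ with $\eta = \|f(\xi)\|$ may well be negative, since $\|y-\xi\|$ can be of order $\nu$ and there is no relation between $\eta$ and $C\nu$. Your own discussion identifies this obstruction and does not overcome it; the digression into Newton iterates of $y$ does not help either, for exactly the reason you note: only $\sR(f)$ is driven to zero along those iterates, not the full $f$.

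The paper's argument avoids all of this with one observation you never use: $\Var(f)\subset\Var(\sR(f))$. First, from $\alpha(\sR(f),x)\le 0.0125$ one gets $\|x-\xi\|\le 2\beta(\sR(f),x)=\frac{2\alpha}{\gamma}\le\frac{0.025}{\gamma}=\nu$, so $\xi\in B(x,\nu)$. Second, Theorem~\ref{Thm:AlphaRobust} (with $\nu<\frac{1}{20\gamma}$) forces every point of $B(x,\nu)$ to be an approximate solution to $\sR(f)$ with associated solution $\xi$, hence $\xi$ is the \emph{only} zero of $\sR(f)$ in $B(x,\nu)$. Now argue by contraposition: if $\xi\notin\Var(f)$, then since $\Var(f)\subset\Var(\sR(f))$ and $B(x,\nu)\cap\Var(\sR(f))=\{\xi\}$, the ball $B(x,\nu)$ contains \emph{no} zero of $f$ at all. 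Thus $z\mapsto\|f(z)\|$ is a continuous positive function on the compact set $B(x,\nu)$ and attains a positive minimum $\epsilon$; no $y\in B(x,\nu)$ can satisfy $\|f(y)\|<\epsilon$. This is the whole proof: compactness replaces your Lipschitz estimate, and the inclusion $\Var(f)\subset\Var(\sR(f))$ is what propagates ``no $\sR(f)$-zeros other than $\xi$'' to ``no $f$-zeros whatsoever'' in the ball. (Your opening sentence about letting $R$ range over a full-measure set is also off the mark; the lemma concerns a single fixed $R$.)
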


\begin{proof}
Define $\nu = \displaystyle\frac{1}{40 \gamma(\sR(f),x)}$ and $B(x,\nu) = \{y\in\bC^N\mid \|x-y\|\leq \nu\}$.
We note that if $\gamma(\sR(f),x) = \infty$, since $B(x,\nu) = \{x\}$, it is easy to verify that we can take
\[
\epsilon = \left\{\begin{array}{ccl} 1 & ~~ & \hbox{if~}f(x) = 0, \\ \frac{\|f(x)\|}{2} & ~~ & \hbox{otherwise.} \end{array}\right.
\]
Hence, we can assume that $\gamma(\sR(f),x) < \infty$.  Since
\[
   \|x - \xi\|\ \leq\ 2 \beta(\sR(f),x)
    \ =\ \frac{2\alpha(\sR(f),x)}{\gamma(\sR(f),x)}
    \ \leq\ \frac{0.025}{\gamma(\sR(f),x)}
    \ =\ \nu\,,
\]
$\xi\in B(x,\nu)$.  Moreover, Theorem~\ref{Thm:AlphaRobust} yields that
$B(x,\nu) \cap \Var(\sR(f)) = \{\xi\}$.

Assume $\xi\notin\Var(f)$.  Since $\Var(f)\subset\Var(\sR(f))$, $B(x,\nu) \cap \Var(f) = \emptyset$.
In particular, $g(z) = \|f(z)\|$ is positive on the compact set $B(x,\nu)$.  Thus,
there exists $\epsilon > 0$ such that $\|f(y)\| \geq \epsilon$ for all $y \in B(x,\nu)$.
\end{proof}

\begin{remark}
  For Lemma~\ref{L:pie_in_sky} to give an algorithm, we would need a general bound for the minimum
  of a positive polynomial on a disk.
  In cases when such a bound is known,
  e.g., \cite{JP}, it is too small to be practical.
\end{remark}

%
\section{Implementation details for \alphaCert}\label{Sec:Details}

The program \alphaCertS is written in C and depends upon GMP \cite{GMP} and MPFR
\cite{MPFR} libraries to perform exact rational and arbitrary precision floating point
arithmetic.
When using rational arithmetic, all internal computations are {\em certifiable}.
Because of the bit length growth of rational numbers under algebraic computations,
\alphaCertS allows the user to select a precision and use floating point arithmetic
in that precision to facilitate computations.
Since floating point errors from  internal computations are not fully controlled,
\alphaCertS only yields a {\em soft certificate} when using
the floating point arithmetic option.
When the polynomial system is overdetermined,
\alphaCertS displays a message informing the user about what it has actually computed.

Three input files are needed to run \alphaCert.  These files contain the polynomial system,
the list of points to test, and the user-defined settings.
See \cite{HS10} for more details regarding exact syntax of these files.
The polynomial system is assumed to have rational complex coefficients
and described in the input file with respect to the basis of monomials.  That is, the user inputs
the coefficient and the exponent of each variable for each monomial term
in each polynomial of the polynomial system.

The set of points to test are assumed to have either rational coordinates
if using rational arithmetic or floating point coordinates if using floating point arithmetic.
When using floating point arithmetic, the points are inputted in the precision selected by the user.

The list of user-defined settings includes the choice between rational and floating point arithmetic,
the floating point precision to use for the basic computations if using floating point arithmetic,
and which certification algorithm to run.
The user can also define a value, say $\tau>0$, such that,
for each certified approximate solution, the associated solution
will be approximated to within $10^{-\tau}$ and printed to a file.

The specific output of \alphaCertS depends upon the user-defined settings.
In each case, an on-screen table summarizes the output as well as a
file that contains a human-readable summary for each point.
The other files created are machine-readable files that can be used
in additional computations.

Linear solving operations are performed using an $LU$ decomposition
and the spectral matrix norm is bounded above using the Frobenius norm.
This choice further worsens the approximation of $\gamma$
described in Proposition~\ref{Prop:BoundGamma},
which has two direct consequences on the performance of the algorithms.
First, this requires that the value of $\beta$ must be smaller in order
to certify approximate solutions in \AlgCertX.  Second, algorithms
\AlgCertDist~and \AlgCertReal~may need to utilize extra Newton iterations.
Apart from the added computational cost, the use of GMP and MPFR
allows \alphaCertS to still perform these computations even
when using such an approximation of $\gamma$.

When using rational arithmetic, \alphaCertS avoids taking square roots
when testing the required inequalities.  When using floating point arithmetic,
as an effort to control the floating point errors, the internal working precision
is increased when updating the point via a Newton
iteration, for instance in Step~(d) of \AlgCertDist~and Step~(d) of \AlgCertReal.

The software \alphaCertS determines if a square polynomial system $f$ in $n$ variables is
real using two tests.
The first test determines if the coefficients of $f$ are real.
The second selects a pseudo-random point $y\in\bQ^n$ and determines if
$\{f_1(y),\dots,f_n(y)\} = \{\overline{f_1(y)},\dots,\overline{f_n(y)}\}$.

The user either instructs \alphaCertS to bypass all tests and declare that $f$ is
real, or which tests to use.
If all tests fail, then \alphaCertS bypasses the real certification.
Otherwise, for each approximate solution $x$ with associated solution $\xi$,
\alphaCertS determines if there exists a real approximate solution
that also corresponds to $\xi$.
If the user incorrectly identified $f$ as real, then $\xi$ may not be real.
Therefore, \alphaCertS displays a message informing
the user about what it actually has certified.

For an overdetermined polynomial system $f$, \alphaCertS only checks
to see if all of the coefficients of $f$ are real.  In this case,
\alphaCertS randomizes $f$ using real matrices to obtain real square subsystems.

%
\section{Computational examples}\label{Sec:Ex}

We used \alphaCertS to study four  polynomial systems whose
number of real solutions is relevant.
Two are from kinematics and two are from enumerative geometry.
All involve polynomial systems that are not easily solved using certified methods from
symbolic computation.
The files used in the computations,
as well as instructions for their use, are found
on our website~\cite{HS10}.
Computations of Sections~\ref{Sec:Lines}
and~\ref{Sec:Schubert} used nodes of the Brazos cluster~\cite{Brazos}
that consist of two 2.5 GHz Intel Xeon E5420 quad-core processors.


\subsection{Stewart-Gough platform}

The Stewart-Gough platform is a parallel manipulator in which six variable-length
actuators are attached between a
fixed frame (the ground) and a moving frame (the platform)~\cite{Go57,St65}.
Each position of the platform uniquely determines the lengths of the six actuators.
However, the lengths of the actuators do not uniquely determine the position and
orientation of the platform, as there are typically several assembly modes, called
{\it positions}.

A generic platform with generic actuator lengths has 40 complex assembly modes.
Dietmaier~\cite{Dietmaier} used a continuation method to find a platform and leg
lengths for which all 40 positions are real.
While his formulation as a system of polynomial equations and conclusions about their
solutions being real have been reproduced numerically (this is a problem in Verschelde's
test suite~\cite{WWW_VerStew}), these computations only give a heuristic verification of
Dietmaier's result.

We modified the polynomial system from Verschelde's test suite, which uses the
parameters obtained by Dietmaier, by converting the
floating point numbers to rational numbers.  We then ran PHCpack~\cite{V99} on the
resulting polynomial system to obtain 40 numerical solutions to the system,
each of which it identified as real.
After converting the floating point coordinates of the solutions
to rational numbers, we ran \alphaCertS using these rational polynomials and
rational points.  It verified that these 40 points correspond to distinct real solutions.
This gives a rigorous mathematical proof of Dietmaier's result.


\subsection{Four-bar linkages}\label{Sec:NinePt}

A {\it four-bar linkage}   is a planar linkage consisting of a triangle with two of its
vertices connected to two bars, whose other endpoints are fixed in the plane.
The base of the triangle, the two attached bars, and the implied bar between the two fixed
points are the four bars.
\[
   \includegraphics{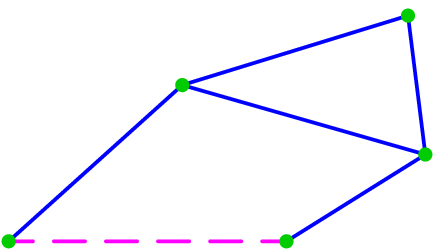}
\]
A general linkage has a one-dimensional constrained motion during which the joints
may rotate, and the curve traced by the apex of the triangle is its
{\it workspace curve}.

The {\it nine-point path synthesis problem} asks for the four-bar linkages whose workspace
curve contains nine given points.
Morgan, Sommese, and Wampler~\cite{MSW} used homotopy continuation to solve a polynomial
system which describes the four-bar linkages whose workspace
curves pass through nine given points.
They found that for nine points $\sP = \{P_0,\dots,P_8\}\subset\bC^2$ in general position, there are
8652 isolated solutions.
Due to a two-fold symmetry, there are 4326 distinct four-bar linkages which appear
in 1442 triplets, called Roberts cognates.
We used \alphaCertS to produce a soft certificate that the polynomial system
has at least 8652 isolated solutions
and, for a specific set of nine real points, certified the number of real
solutions among these 8652 solutions.

If $\sP\subset\bR^2$, the formulation of~\cite{MSW}
is not a real polynomial system.
The usual approach of writing the variables using real and imaginary parts gives a
real polynomial system $f_\sP$ consisting of four quadratic and eight quartic
polynomials.
For nine points $\sP = \{P_0,\dots,P_8\}\subset\bC^2$,
the polynomial system $f_\sP:\bC^{12}\rightarrow\bC^{12}$
depends upon the variables
\[
   \{a_1,\,a_2,\,n_1,\,n_2,\,x_1,\,x_2,\,b_1,\,b_2,\,m_1,\,m_2,\,y_1,\,y_2\}\,.
\]
Define the complex numbers
 \begin{align*}
  a &= a_1 + \sqrt{-1}\cdot a_2,&  n &= n_1 + \sqrt{-1}\cdot n_2,&  x &= x_1 + \sqrt{-1}\cdot x_2, \\
  b &= b_1 + \sqrt{-1}\cdot b_2,&  m &= m_1 + \sqrt{-1}\cdot m_2,&  y &= y_1 + \sqrt{-1}\cdot y_2,
 \end{align*}
whose complex conjugates are
$\overline{a},\overline{n},\overline{x},\overline{b},\overline{m},\overline{y}$, respectively.
These correspond to the variables used in the formulation in \cite{MSW}.
The four quadratic polynomials of $f_\sP$ are
\[
  \begin{array}{cc}
    f_1\ =\ n_1 - a_1x_1 - a_2x_2\,, & f_2\ =\ n_2 + a_1x_2 - a_2x_1\,, \\
    f_3\ =\ m_1 - b_1y_1 - b_2y_2\,, & f_4\ =\ m_2 + b_1y_2 - b_2y_1\,.
  \end{array}
\]
The eight quartic polynomials depend
upon the displacements from $P_0$ to the other points $P_j$.
For $j = 1,\dots,8$, define $Q_j := (Q_{j,1},Q_{j,2}) = P_j - P_0$ and
write each displacement $Q_j$ using {\em isotropic coordinates}, namely
$(\delta_j,\overline{\delta}_j)$ where
\[
   \delta_j = Q_{j,1} + \sqrt{-1}\cdot Q_{j,2}
   \hskip 0.1in \hbox{and} \hskip 0.1in
   \overline{\delta}_j = Q_{j,1} - \sqrt{-1}\cdot Q_{j,2}.
\]

For $j = 1,\dots,8$, the quartic polynomial $f_{4+j}$ of $f_\sP$ is
\[
  f_{4+j}\ :=\ \gamma_j \overline{\gamma}_j + \gamma_j\gamma_j^0 + \overline{\gamma}_j\gamma_j^0
\]
where
\[
  \begin{array}{ccccc}
    \gamma_j := q_j^x r_j^y - q_j^y r_j^x, & \hbox{~~~} & \overline{\gamma}_j := r_j^x p_j^y - r_j^y p_j^x, & \hbox{~~~} &
    \gamma_j^0 := p_j^x q_j^y - p_j^y q_j^x
  \end{array}
\]
and
\[
  \begin{array}{ccccc}
   p_j^x := \overline{n} - \overline{\delta}_j x, & \hbox{~~} & q_j^x := n - \delta_j \overline{x}, & \hbox{~~} &
   r_j^x := \delta_j(\overline{a}-\overline{x}) + \overline{\delta}_j(a-x) - \delta_j\overline{\delta}_j, \\    \rule{0pt}{16pt}
   p_j^y := \overline{m} - \overline{\delta}_j y, & \hbox{~~} & q_j^y := m - \delta_j \overline{y}, & \hbox{~~} &
   r_j^y := \delta_j(\overline{b}-\overline{y}) + \overline{\delta}_j(b-y) - \delta_j\overline{\delta}_j.
  \end{array}
\]

We first certified that, for nine randomly selected points in the complex plane,
the resulting polynomial system has at least 8652 isolated solutions.
Since the displacements $Q_j$ define the polynomial system,
we choose them to be points of $\bQ[\sqrt{-1}]^2$
with each coordinate having unit modulus of the form
 \[
   \frac{t^2-1}{t^2+1}\ +\ \sqrt{-1}\cdot\frac{2t}{t^2+1}
 \]
where $t$ was a quotient of two ten digit random integers.
We used regeneration \cite{HSW10} in Bertini \cite{BHSW06}
to compute 8652 points that were {\em heuristically}
within $10^{-100}$ of an isolated solution for $f_\sP$.
Then, \alphaCertS produced a soft certificate using
256-bit precision that these 8652 points are
approximate solutions to $f_\sP$ with distinct associated solutions.

We next certified the number of real solutions
for a specific set of nine real points, namely Problem 3 of \cite{MSW}.
The nine real points are listed in Table 2 of \cite{MSW}, which,
for convenience, we list the values of $\delta_j$ in Table~\ref{Tab:NinePts}.
\begin{table}[htb]
  \caption{Values of $\delta_j$ for Problem 3 of \cite{MSW}}\label{Tab:NinePts}
  \begin{tabular}{|c|l|}
  \hline
  $j$ & \multicolumn{1}{c|}{$\delta_j$} \\
  \hline
  1 & \hskip 0.125in $0.27+0.1\sqrt{-1}$\rule{0pt}{12pt}\\
  \hline
  2 & \hskip 0.125in $0.55+0.7\sqrt{-1}$\rule{0pt}{12pt}\\
  \hline
  3 & \hskip 0.125in $0.95+\sqrt{-1}$\rule{0pt}{12pt}\\
  \hline
  4 & \hskip 0.125in $1.15+1.3\sqrt{-1}$\rule{0pt}{12pt}\\
  \hline
  5 & \hskip 0.125in $0.85+1.48\sqrt{-1}$\rule{0pt}{12pt}\\
  \hline
  6 & \hskip 0.125in $0.45+1.4\sqrt{-1}$\rule{0pt}{12pt}\\
  \hline
  7 & $-0.05+\sqrt{-1}$\rule{0pt}{12pt}\\
  \hline
  8 & $-0.23+0.4\sqrt{-1}$\rule{0pt}{12pt}\\
  \hline
  \end{tabular}
\end{table}
Since the points are real, $\overline{\delta}_j$ is the conjugate of $\delta_j$.
We used parameter continuation in Bertini to
solve the resulting polynomial system starting from the
8652 solutions to the polynomial system
solved in the first test.  This generated a list of 8652 points
which \alphaCertS soft certified using 256-bit precision
to be approximate solutions that have distinct associated solutions of which 384 are real.
In particular, this confirms the results reported in
Table 3 of \cite{MSW} for Problem 3, namely,
that $64=384/6$ of the 1442 mechanisms are real.

Figure~\ref{F:four-bar} shows three of the 64 real mechanisms that solve
this synthesis problem, together with their workspace curves.
\begin{figure}[htb]
   \includegraphics[height=4.5cm]{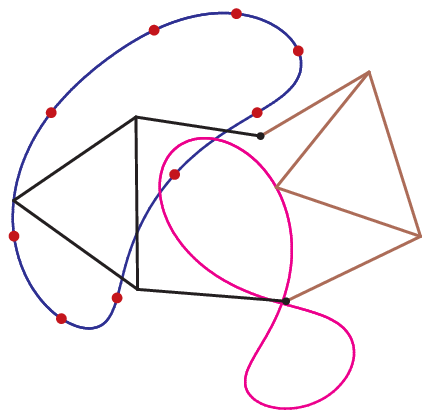}\qquad
   \includegraphics[height=4.5cm]{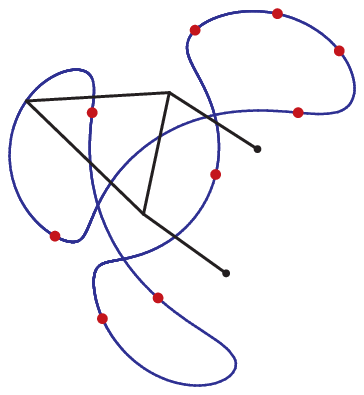}\qquad
   \includegraphics[height=4.5cm]{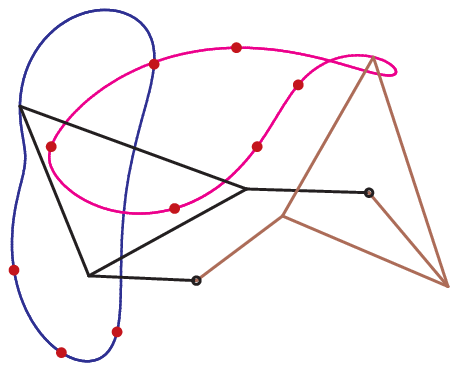}
\caption{Three solutions.}
\label{F:four-bar}
\end{figure}
The first has two assembly modes with the workspace curve of one mode a simple closed
curve that contains the nine target points.
This mechanism is the only viable mechanism among the 64 real mechanisms.
The second has only one assembly mode, but its workspace curve is convoluted and does not
meet the target points in a useful order.
The third has two assembly modes, and each only reaches a proper subset of the target
points.

%
\subsection{Lines, points, and conics}\label{Sec:Lines}

We consider geometric problems of plane conics in $\bC^3$ that meet $k$ points and $8-2k$
lines for $k=0,\dotsc,4$.
When the points and lines are general, the numbers of plane conics are known
and presented in Table~\ref{T:plc}.
\begin{table}[htb]
\caption{Numbers of plane conics}
 \begin{tabular}{|r||r|r|r|r|r|}\hline
                $k$ & 4 & 3 & 2 & 1  & 0 \\\hline
   Number of conics & 0 & 1 & 4 & 18 & 92\\\hline
 \end{tabular}
\label{T:plc}
\end{table}

This problem is from a class of problems in enumerative geometry---counting rational
curves---that has been of great interest in recent years~\cite{FuPa}.
For problems of enumerating rational curves of degree $d$ in the plane that interpolate
$3d-1$ real points, Welschinger~\cite{W} defined an invariant $W_d$ which is a lower bound
on the number of real rational curves, and work of Mikhalkin~\cite{Mi} and of Itenberg,
Kharlamov, and Shustin showed that $W_d$ is positive and eventually found a
formula for it~\cite{IKS}.

We used \alphaCertS to investigate the possible numbers of real solutions to these
problems of conics when their input data (points and lines) are real.
Of particular interest is the minimum number of solutions that are real.
Our experimental data suggests that when $k=1$ at least two of the solutions will be real,
and it shows that for $k=0,2$, it is possible to have no real solutions.

This experiment computed random instances of the problem.
The coordinates of points were taken to be the quotient of two random ten digit
integers, and the real lines were taken to be lines through two such points.
The resulting polynomial system was square.
Each real instance was solved by Bertini \cite{BHSW06} using a straight line
parameter homotopy starting with a fixed random complex instance
(see \cite{SW05} for more details).
This gave points that were {\em heuristically} within
$10^{-75}$ of each isolated solution.
Then \alphaCertS used 256-bit precision to softly certify that the points computed by
Bertini were approximate solutions corresponding to distinct solutions, and to count the
number of real solutions.
Since enumerative geometry provides the generic root count,
this yields a post-processing certificate that Bertini has indeed computed
an approximate solution corresponding to each solution to the polynomial system.

In every instance that Bertini successfully tracked every path,
the heuristic results of Bertini matched the certified results of \alphaCert.
Out of the over 1,450,000,000 paths tracked, 76 paths were truncated
by Bertini due to a fail-safe measure.  Thirty-two paths were truncated since
they needed more than the fail-safe limit of 10,000 steps along the path.
Each of these paths were successfully tracked when the limit was raised to 25,000 steps.
Forty-four paths were truncated since the adaptive precision tracking
algorithm \cite{BHSW08,BHSW09,BHS10} requested to use
more than the fail-safe limit of 1024-bit precision.
Each of these paths were successfully tracked when the fail-safe limit was raised to
1284-bit precision.

The first interesting case is when $k=2$ and there are
four conics meeting two points and four lines.
We solved 500 random real instances using the Brazos cluster.
Each instance took an average of $0.7$ seconds for Bertini to solve
and $0.1$ seconds for \alphaCertS to certify the results.
We found that there can be $0$, $2$, or $4$ real solutions.
Table~\ref{Tab:Conic24} presents the frequency distribution
of these 500 instances for this case.
{\scriptsize
\begin{table}[htb]
  \caption{Frequency distribution for conics through two points and four lines}\label{Tab:Conic24}
  \begin{tabular}{|c|c|c|c||c|}
  \hline
  \# real & 0 & 2 & 4 & total \\
  \hline
  frequency & 12 & 221 & 267 & 500 \\
  \hline
  \end{tabular}
\end{table}}

When $k=1$, there are 18 conics meeting a point and six lines in $\bC^3$.
We solved 1,000,000
random real instances using the Brazos cluster.
Each instance took an average
of $1.6$ seconds for Bertini to solve and
an average of $0.1$ seconds for \alphaCertS to certify the results.
Every real instance
that we computed had at least 2 real solutions.
Table~\ref{Tab:Conic18} presents the frequency distribution
of these 1,000,000 instances for this case.
{\scriptsize
\begin{table}[htb]
  \caption{Frequency distribution for conics through a point and six lines}\label{Tab:Conic18}
  \begin{tabular}{|c|c|c|c|c|c|c|c|c|c|c||c|}
  \hline
  \# real & 0 & 2 & 4 & 6 & 8 & 10 & 12 & 14 & 16 & 18 & total \\
  \hline
  frequency & 0 & 3281 & 21984 & 88813 & 193612 & 261733 & 226383 & 137074 & 53482 & 13638 & 1000000 \\
  \hline
  \end{tabular}
\end{table}
}

To compare the performance of \alphaCertS to  symbolic methods, we computed 40,000 instances of the
conic problem with $k=1$ using Singular~\cite{SINGULAR} to compute an eliminant that satisfies the Shape
Lemma~\cite{BMMT} and Maple to count the number of real roots of the eliminant, which is a standard
symbolic method to determine the number of real solutions to a zero-dimensional system of polynomial
equations.
The coordinates of points were taken to be rational numbers $p/q$ where $p,q$ were integers with
$|p|<4000$ and $0<q<1000$.
Each computation took approximately 661 seconds on a single node of a server with four
six-core AMD Opteron 8435 processors and 64 GB of memory.
Table~\ref{Tab:Conic18.symb} presents the frequency distribution
of these 40,000 instances for this case.
{\scriptsize
\begin{table}[htb]
  \caption{Frequency distribution for conics through a point and six lines}\label{Tab:Conic18.symb}
  \begin{tabular}{|c|c|c|c|c|c|c|c|c|c|c||c|}
  \hline
  \# real & 0 & 2 & 4 & 6 & 8 & 10 & 12 & 14 & 16 & 18 & total \\
  \hline
  frequency &  0 & 146 & 892 & 3558 & 7739 & 10575 & 8965 & 5488 & 2089 & 548 & 40000\\
  \hline
  \end{tabular}
\end{table}
}

Finally, when $k=0$, there are 92 plane conics meeting eight general lines in $\bC^3$.
We solved 15,662,000 random real instances using the Brazos cluster.
On average, each instance took $8.8$ seconds for Bertini to solve and
$0.7$ seconds for \alphaCertS to certify the results.
Table~\ref{Tab:Conic8} presents the frequency distribution
of these instances.

{\scriptsize
\begin{table}[htb]
  \caption{Frequency distribution for conics through eight lines}\label{Tab:Conic8}
  \begin{tabular}{|c|c|c|c|c|c|c|c|c|}
  \hline
  \# real & 0 & 2 & 4 & 6 & 8 & 10 & 12 & 14 \\
  \hline
  frequency & 1 & 8 & 26 & 65 & 466 & 1548 & 4765 & 11928 \\
  \hline
  \hline
  \# real & 16 & 18 & 20 & 22 & 24 & 26 & 28 & 30 \\
  \hline
  frequency& 26439 & 52875 & 98129 & 167932 & 270267 & 404918 & 569891 & 756527 \\
  \hline
  \hline
  \# real & 32 & 34 & 36 & 38 & 40 & 42 & 44 & 46 \\
  \hline
  frequency & 942674 & 1114033 & 1246533 & 1332289 & 1355320 & 1319699 & 1226667 & 1091019 \\
  \hline
  \hline
  \# real & 48 & 50 & 52 & 54 & 56 & 58 & 60 & 62 \\
  \hline
  frequency & 932838 & 762463 & 596174 & 449021 & 323927 & 223455 & 149629 & 95740 \\
  \hline
  \hline
  \# real & 64 & 66 & 68 & 70 & 72 & 74 & 76 & 78 \\
  \hline
  frequency & 59141 & 34834 & 19516 & 10672 & 5671 & 2744 & 1290 & 530 \\
  \hline
  \hline
  \# real & 80 & 82 & 84 & 86 & 88 & 90 & 92 & \multicolumn{1}{||c|}{total}\\
  \hline
  frequency & 204 & 90 & 26 & 11 & 3 & 2 & 0 & \multicolumn{1}{||c|}{15662000}\\
  \hline
  \end{tabular}
\end{table}}

%
\subsection{A Schubert problem}\label{Sec:Schubert}

Our last example concerns a problem in the Schubert calculus of enumerative geometry,
which is a rich class of geometric problems involving linear subspaces of a vector space.
Many problems in the Schubert calculus are naturally formulated as overdetermined
polynomial systems.  We investigate one such problem that can also be formulated
as a square polynomial system using the approach of \cite{BHPS}.
In particular, we demonstrate \alphaCert's algorithms for overdetermined systems
as well as investigate a conjecture on the reality of its solutions.

This problem involves four-dimensional linear subspaces (four-planes) $H$ of $\bC^8$ that
have a non-trivial intersection with each of eight general three-planes $K_0,\dotsc,K_7$.
The Schubert calculus predicts $126$ such four-planes.
To formulate this Schubert problem, consider $H$ to be the column space of a $8\times 4$ matrix in
block form
\[
   H\  =\ \left[\begin{array}{c} I_4 \\ X \end{array}\right]\,,
\]
where $I_4$ is the $4\times 4$ identity matrix and $X$ is a $4\times 4$ matrix of
indeterminates.
Represent a three-plane $K$ as the column space of a $8\times 3$ matrix of constants.
Then the condition that $H$ meets $K$ non-trivially is equivalent to the vanishing of the
determinants of the eight $7\times 7$ square submatrices of the $8\times 7$ matrix
 \begin{equation}\label{Eq:A}
   A\  =\ \left[ H~~K \right]\,.
 \end{equation}
In this standard formulation, the Schubert problem is a system of $64$ equations in $16$
indeterminates.
Using a total degree homotopy to solve this would follow $4^{16}$ paths.

There is a second formulation which we used.
Write $K$ in block form,
\[
   K\  =\ \left[\begin{array}{c} \sK_1 \\ \sK_2 \end{array}\right]\,,
\]
where $\sK_1$ and $\sK_2$ are $4 \times 3$ matrices.
A linear dependency among the columns of $A$~\eqref{Eq:A} is given by vectors $v\in\bC^4$ and
$w\in\bC^3$ such that $Hv+Kw=0$.
Applying this to the different blocks of $H$ and $K$ gives
\[
   I_4 v + \sK_1 w\ =\ 0\qquad\mbox{and}\qquad
     X v + \sK_2 w\ =\ 0\,,
\]
which is equivalent to $\widehat{A}w=0$, where $\widehat{A} := \sK_2 - X \sK_1$.
Thus $H$ meets $K$ non-trivially if and only if each $3\times 3$ minor of $\widehat{A}$
vanishes.
This gives a system $F_O(x)$ of $32$ cubic polynomials in $16$ indeterminates, which is more compact
than the original formulation.

We certified solutions to this overdetermined polynomial system $F_O$.
We randomized $F_O$ to maintain the structure of the equations as follows.
For each $i = 0,\dots,7$ and $j = 1,2,3,4$, let $f_{i,j}$ be the determinant
of the submatrix created by removing the $j^{th}$ row of the matrix $\widehat{A}_i$ corresponding to
the $i$th three-plane.
Then, for each $j$, we take four random linear combinations of the
polynomials $f_{0,j},f_{1,j},\dots,f_{7,j}$.
This preserves the multilinear structure of the equations in the
four variable groups corresponding to the columns of $X$.
Solving this system using regeneration \cite{HSW10} finds 22,254 solutions
of which \alphaCertS soft certified using 256-bit precision
that 126 of these are approximate solutions to two different random square subsystems of
$F_O$
with associated solutions within a distance of $\delta = 10^{-10}$ of each other.
The same result was also obtained using $\delta = 10^{-5}$.
Thus, \alphaCertS provided a soft certificate based on the heuristic algorithm
for overdetermined systems that we found all 126 solutions to the Schubert
problem.\smallskip

This Schubert problem has an equivalent formulation as a square system.
The columns of $\widehat{A}$ are linearly dependent if and only if there exists
$0\neq v\in\bC^3$ such that $\widehat{A} v = 0$.
For generic $\alpha_1,\alpha_2\in\bC$, this occurs if and only if there exists
$y_1,y_2\in\bC$ such that
\[
    \widehat{A} \cdot
    \left[\begin{array}{c} y_1 \\ y_2 \\ \alpha_1 y_1 + \alpha_2 y_2 + 1 \end{array}\right]
   \ =\ 0\,.
\]
This yields a system of $32$ polynomials
in $32$ indeterminates, say $F_S(x,y^{(0)},\dots,$ $y^{(7)})$.
This polynomial system consists of four bilinear polynomials
in $x$ and $y^{(i)}$ for each $i = 0,\dots,7$.  Since
$y^{(i)}$ consists of two indeterminates, namely $y_1^{(i)}$ and $y_2^{(i)}$,
a $9$-homogeneous homotopy used to solve $F_S$ would follow $\binom{4}{2}^8 = 6^8$ paths.
As described in \cite{BHPS}, we are interested in the components
of $\Var(F_S)$ having fibers with generic dimension zero.
For generic $K_0,\dots,K_7$, since $\Var(F_S)$ is zero-dimensional,
$\Var(F_O)$ and $\Var(F_S)$ both consist of $126$ isolated points
and $\Var(F_S)$ naturally projects onto $\Var(F_O)$.

We also investigated the number of real solutions when
the three planes $K_i$ are as follows.
For $t\in\bR$, let $\gamma(t)=(1,t,t^2,\dotsc,t^7)\in\bR^8$ be a point on the
moment curve.
Select $24$ rational numbers $t_1,\dotsc,t_{24}$ and for $i=0,\dotsc,7$, let $K_i$ be the
span of the three linearly independent vectors
$\gamma(t_{3i+1})$, $\gamma(t_{3i+2})$, and $\gamma(t_{3i+3})$.
When $t_1<t_2<\dotsb<t_{24}$, the Secant Conjecture~\cite{secant} posits that all 126
solutions will be real, but if the points are not in this or some equivalent order, then
other numbers of real solutions are possible.

Since $K_0,\dots,K_7$ are real, if the constants $\alpha_i$ are real, then the
real points of $\Var(F_O)$ correspond to the real points of
$\Var(F_S)$.
We solved 25000 real instances using random numbers in the interval $[-1,1]$
and softly certified that each had 126 real solutions, using 256-bit precision.

Lastly, we investigated the number of real solutions when
the three planes $K_i$ are as follows.  For $i = 0,\dots,7$, let $t_i\in\bC$ be generic
under the condition that $2k$ are complex conjugate pairs and $8-2k$ are real, where $0 \leq k \leq 4$.
Define $K_i = T(t_i)$ where
\[
  T(t) = \left[\begin{array}{ccc} 1 & 0 & 0 \\ t & 1 & 0 \\ t^2 & 2t & 1 \\
      t^3 & 3t^2 & 3t \\ t^4 & 4t^3 & 6t^2 \\ t^5 & 5t^4 & 10t^3 \\ t^6 & 6t^5 & 15t^4 \end{array}\right].
\]
Then $K_i$ is the three-plane osculating the moment curve at the point $\gamma(t_i)$.
When $k=0$, that is, when each $t_i$ is real, this is the Shapiro Conjecture (MTV
Theorem)~\cite{FRSC,MTV} and all 126 solutions are real.
We tested 1000 such instances and for each, \alphaCertS correctly identified all 126
solutions to be  real.
Our primary interest was when $k>0$, for we wanted to test the hypothesis that there would
be a  lower bound to the number of real solutions if the set of osculating three-planes
were real
(that is, if $\{\overline{K_0},\dotsc,\overline{K_7}\}=\{K_0,\dotsc,K_7\}$).
This is what we found, as can be seen in the partial frequency table we give below.
(To better show the lower bounds, we omit writing $0$ in the cells with no observed
instances.)
This enumeration of real solutions was softly certified using 256-bit precision.

\begin{table}[htb]
  \caption{Frequency distribution for the Schubert problem}\label{Tab:Schubert}
  \begin{tabular}{|c||c|c|c|c|c|c|c|c|c|c|c|c|c|c||r|}
  \hline
    & \multicolumn{12}{|c|}{\# real}& \\
  \hline
   $k$ &0& 2 & 4 & 6 & 8 & 10 & 12 &$\dotsb$& 18 & 20 &  22 &$\dotsb$&124& 126 & total \\
  \hline \hline
    0&&&&&& & &$\dotsb$&& &  &$\dotsb$&& 1000& 1000\\
  \hline
    1  &&&& 6 & 6 & 10 & 88 &$\dotsb$&554&1888&1832&$\dotsb$&69& 2021& 42000\\
  \hline
    2  &&&&&&2614&3771&$\dotsb$&3285&1579& 1378&$\dotsb$&1& 38& 24000\\
  \hline
    3  &&&&&&8896&4479&$\dotsb$&1079& 721& 2586&$\dotsb$&&    & 23500\\
  \hline
    4  &&&&&& &    &$\dotsb$&&&19134&$\dotsb$& &  1& 22500\\
  \hline
  \end{tabular}
\end{table}
This computation was part of a larger test of hypothesized lower bounds~\cite{lower}.

%
\section{Conclusion}\label{Sec:Conclusion}

Smale's $\alpha$-theory provides a way to certify
solutions to polynomial systems, determine
if two points correspond to distinct solutions,
and determine if the corresponding solution is real.
Using either exact rational
or arbitrary precision floating point arithmetic,
\alphaCertS is a program which implements
these $\alpha$-theoretical methods.

We have also produced a Maple interface to \alphaCertS
to facilitate the construction of the input files needed.

\section*{Acknowledgements} The authors would like to thank Mike Shub for his helpful comments
and the first author would like to thank the organizers of
the Foundations of Computational Mathematics thematic program at the Fields Institute.

\providecommand{\bysame}{\leavevmode\hbox to3em{\hrulefill}\thinspace}
\providecommand{\MR}{\relax\ifhmode\unskip\space\fi MR }
\providecommand{\MRhref}[2]{%
  \href{http://www.ams.org/mathscinet-getitem?mr=#1}{#2}
}
\providecommand{\href}[2]{#2}

\end{document}